\newtheorem{theo}{Theorem\bf}
\newtheorem{definition}{Definition\bf}
\begin{document}

\bibliographystyle{plain}

\large \begin{center}A GENERALIZED CLOSED FORM FOR TRIANGULAR MATRIX POWERS \normalsize

Walter Shur
\end{center}

\begin{center}20 Speyside Circle, Pittsboro, North Carolina, 27312, USA

Tel: 919-542-7179 \ \ Email: wrshur@gmail.com

Affiliation: None (Retired) \end{center}\vspace{.2in}

\begin{center}\bf ABSTRACT.\end{center}

 \rm  Given a triangular matrix $M=[m_{i,j}]$,  this paper shows how to obtain numbers $c_{i,j,r,s}$ such that the $(i,j)^{th}$ element of $M^n$ is given by $_nm_{i,j}=\displaystyle\sum_{r=1}^{num(i,j)}\sum_{s=1}^{mpy_{i,j}(r)}c_{i,j,r,s} \binom{n-1}{s-1}$ $m_{i,j,r}^{n-s}$, where $
num(i,j)$ is the number of unique diagonal elements between and including the $i^{th}$ and $j^{th}$ rows, $\{ m_{i,j,r}\}_1^{num(i,j)}$ is the set of those unique elements, and $mpy_{i,j}(r)$ is the multiplicity of $m_{i,j,r}$ on that same range.  The $c_{i,j,r,s}$ are independent of the power to which the matrix is raised. This generalized formula works for any power of $M$, negative, zero or positive (positive only, if the matrix is singular).\rm

\vspace{.3in}

\bf Keywords:\ \rm   Matrix, Triangular, Powers, Closed Form\rm

\bf AMS Subject Classification:\ \rm 15A99, 65F30

\vspace{.3in}

\bf 1. INTRODUCTION\rm 

 [1] presents a method of obtaining a simple closed form for the powers of a triangular matrix with unique diagonal elements, as follows:
\vspace{.1in}

\begin{definition}
Let $M=[m_{i,j}]$ be a $k\times k$ upper triangular matrix with unique diagonal elements. We define the power factors of $M$, $p_{i,j,s}$, recursively on the index $j$\ ,as follows:

\vspace{.3 in}

$p_{i,j,s}=\frac {\displaystyle \sum_{t=s}^{j-1}p_{i,t,s}m_{t,j}}{m_{s,s}-m_{j,j}}$\hspace{1 in} $i\leq s <j\leq k$,\hspace{.5 in}(1.1)

\vspace{.3 in}
$p_{i,j,s}=0 \hspace{1.85in} s<i,\ s>j$,

\vspace{.3 in}
\nopagebreak
$p_{i,j,j}=m_{i,j}-\displaystyle\sum_{t=i}^{j-1}{p_{i,j,t}} \hspace{.95in} i<j\le k$,\hspace{.6 in}(1.2)

\vspace{.3 in}

$p_{j,j,j}=m_{j,j}$.

\end{definition}

\begin{theo}

If  $M=[m_{i,j}]$ is a non-singular upper triangular matrix with unique diagonal elements, and $_{n}m_{i,j}$is the $(i,j)^{th}$ element of $M^n$, then  
\[_{n}m_{i,j}=\displaystyle\sum_{s=i}^{j}p_{i,j,s}m_{s,s}^{n-1},\]
for all integral values of $n$, negative, positive or zero. If $M$ is singular, the equation holds if $n\geq 1$ and $0^0$ is taken as $1$. 
\end{theo}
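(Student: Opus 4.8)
The plan is to prove the identity by induction on the width $w=j-i$, using the multiplicative law $M^{n+1}=M^{n}M$ to reduce the $(i,j)$ entry to entries of smaller width. Write $f_{i,j}(n)=\sum_{s=i}^{j}p_{i,j,s}\,m_{s,s}^{n-1}$ for the claimed closed form. A preliminary remark: the power factors are well defined in every case, because the denominator $m_{s,s}-m_{j,j}$ in (1.1) is nonzero whenever $s<j$, by the distinctness of the diagonal entries; this is the only use of that hypothesis. Since a power (positive, zero, or, when $M$ is invertible, negative) of an upper triangular matrix is again upper triangular, $(M^{n}M)_{i,j}=\sum_{t=i}^{j}{}_{n}m_{i,t}\,m_{t,j}$, and moving the top term $t=j$ to the left gives the first order linear recurrence in $n$
\[{}_{n+1}m_{i,j}-m_{j,j}\,{}_{n}m_{i,j}=\sum_{t=i}^{j-1}{}_{n}m_{i,t}\,m_{t,j}.\]
For the base case $w=0$, ${}_{n}m_{i,i}=m_{i,i}^{n}=m_{i,i}\cdot m_{i,i}^{n-1}=p_{i,i,i}\,m_{i,i}^{n-1}$, which holds for every integer $n$ if $m_{i,i}\neq0$ and, with $0^{0}:=1$, for every $n\geq1$ if $m_{i,i}=0$.

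For the inductive step fix $(i,j)$ with $w=j-i\geq1$ and assume the formula for all entries of width $<w$; in particular ${}_{n}m_{i,t}=f_{i,t}(n)$ for $i\leq t\leq j-1$. Substituting into the recurrence and interchanging the two summations, its right-hand side becomes $\sum_{s=i}^{j-1}\bigl(\sum_{t=s}^{j-1}p_{i,t,s}\,m_{t,j}\bigr)m_{s,s}^{n-1}$, which by (1.1) equals $g(n):=\sum_{s=i}^{j-1}(m_{s,s}-m_{j,j})\,p_{i,j,s}\,m_{s,s}^{n-1}$. On the other hand, expanding directly, $f_{i,j}(n+1)-m_{j,j}f_{i,j}(n)=\sum_{s=i}^{j}(m_{s,s}-m_{j,j})\,p_{i,j,s}\,m_{s,s}^{n-1}$, and the $s=j$ summand is $0$; hence the sequences $n\mapsto f_{i,j}(n)$ and $n\mapsto{}_{n}m_{i,j}$ satisfy one and the same recurrence $x_{n+1}-m_{j,j}x_{n}=g(n)$, with $g(n)$ already a known sequence by the inductive hypothesis.

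It remains to match one value of $n$. A recurrence $x_{n+1}=m_{j,j}x_{n}+g(n)$ with prescribed $x_{1}$ has a unique solution for $n\geq1$, and a unique solution on all integers once $m_{j,j}\neq0$; moreover ${}_{n}m_{i,j}$ does satisfy the recurrence for all integral $n$ when $M$ is non-singular, since then $M^{n+1}=M^{n}M$ for every integer $n$. Evaluating at $n=1$, $f_{i,j}(1)=\sum_{s=i}^{j}p_{i,j,s}=p_{i,j,j}+\sum_{s=i}^{j-1}p_{i,j,s}=m_{i,j}={}_{1}m_{i,j}$ by (1.2), reading $0^{0}=1$ where a diagonal entry vanishes. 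Therefore $f_{i,j}(n)={}_{n}m_{i,j}$ for all $n\geq1$ in every case, and for all integers $n$ when $M$ is non-singular, which completes the induction.

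The step I expect to require the most care is the interchange of summations together with the precise use of (1.1): one must track the index ranges so that, after the swap, the coefficient of $m_{s,s}^{n-1}$ is exactly $(m_{s,s}-m_{j,j})\,p_{i,j,s}$ for $i\leq s\leq j-1$, and then recognize this as the $s<j$ part of the expansion of $f_{i,j}(n+1)-m_{j,j}f_{i,j}(n)$, whose additional $s=j$ term cancels. The base case, the evaluation at $n=1$ via (1.2), and the extension to negative $n$ via invertibility are then routine bookkeeping, with the non-singularity of $M$ entering only in that last extension.
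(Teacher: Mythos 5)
Your proof is correct, but note that this paper does not actually prove Theorem 1 at all: it is imported verbatim from \cite{S}, and the present paper's proofs concern only Theorem 2 (the adjusted-chain reformulation of the $p_{i,j,s}$) and Theorem 3 (the non-unique-diagonal generalization). Your argument --- induction on the width $j-i$, using $(M^{n}M)_{i,j}=\sum_{t}{}_{n}m_{i,t}m_{t,j}$ to get the first-order recurrence ${}_{n+1}m_{i,j}-m_{j,j}\,{}_{n}m_{i,j}=\sum_{t=i}^{j-1}{}_{n}m_{i,t}m_{t,j}$, identifying its right-hand side with $\sum_{s=i}^{j-1}(m_{s,s}-m_{j,j})p_{i,j,s}m_{s,s}^{n-1}$ via the interchange of sums and (1.1), and anchoring at $n=1$ via (1.2) --- is exactly the argument the recursive definition (1.1)--(1.2) is engineered to support, and it is self-contained: the distinctness of the diagonal entries enters only to make the denominators in (1.1) nonzero, non-singularity only to run the recurrence backwards (and to make $m_{s,s}^{n-1}$ meaningful for $n\le 0$), and the $0^{0}=1$ convention only at $n=1$, all of which you flag correctly. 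The one route this paper itself suggests that you did not take is the explicit formula of Definition 2: once $p_{i,j,s}$ is written as a sum of adjusted chains, the closed form can alternatively be verified by expanding ${}_{n}m_{i,j}$ as a sum over products $m_{i,a}m_{a,b}\cdots m_{l,j}$ of geometric-type sums and using partial fractions (divided differences, as in \cite{J}); your recurrence proof is the more elementary of the two and needs no such machinery, while the chain expansion buys the non-recursive description that Section 3 then exploits.
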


\vspace{.3 in}
\bf 2. ALTERNATE DEFINITION FOR $p_{i,j,s}$.\rm

 Let $M=[m_{i,j}]$ be an upper triangular matrix with unique diagonal elements. The product \[ m_{i,a}m_{a,b}m_{b,c}\cdots m_{l,j},\] where $i\le a <b<c \cdots<l<j$,  and $s$ is $j$ or  one of $a$,$b$,$c$,$\cdots$ $l$, is  called a chain from $i$ to $j$  passing through $s$. The length of the chain is the number of elements in the product. The expression \[\frac{ m_{i,a}m_{a,b}m_{b,c}\cdots m_{l,j}}{(m_{s,s}-m_{a,a})(m_{s,s}-m_{b,b})(m_{s,s}-m_{c,c})\cdots (m_{s,s}-m_{l,l})(m_{s,s}-m_{j,j})}\]
where $(m_{s,s}-m_{s,s})$ is taken as $1$, is called  an adjusted chain from $i$ to $j$  passing through $s$.

\begin{definition}
 If  $i\le s \le j$, $ p_{i,j,s}$ is the sum of all adjusted chains from $i$ to $j$  passing through $s$.
If $s<i$ or $s>j$, $ p_{i,j,s}=0$. 
\vspace{.2 in}

\rm Following are a few illustrative examples which help clarify the definition:
\vspace{.1 in}

$p_{1,1,1}=m_{1,1},$ 
\vspace{.2 in}

$p_{1,3,1}=\frac{m_{1,1}m_{1,3}}{(m_{1,1}-m_{3,3})}+\frac{m_{1,1}m_{1,2}m_{2,3}}{(m_{1,1}-m_{2,2})(m_{1,1}-m_{3,3})}$,
\vspace{.2 in}

$p_{1,3,2}=\frac{m_{1,2}m_{2,3}}{m_{2,2}-m_{3,3}}+\frac{m_{1,1}m_{1,2}m_{2,3}}{(m_{2,2}-m_{1,1})(m_{2,2}-m_{3,3})}$,
\vspace{.2 in}

$p_{1,3,3}=m_{1,3}+\frac{m_{1,1}m_{1,3}}{m_{3,3}-m_{1,1}}+\frac{m_{1,2}m_{2,3}}{m_{3,3}-m_{2,2}}+\frac{m_{1,1}m_{1,2}m_{2,3}}{(m_{3,3}-m_{1,1})(m_{3,3}-m_{2,2})}$.

\end{definition} 
\begin{theo}
Definition 2 is equivalent to Definition 1.
\end{theo}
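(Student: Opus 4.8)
The plan is to verify that the numbers given by Definition 2 --- write them $q_{i,j,s}$ for clarity --- satisfy every defining relation of Definition 1. Since relations (1.1), (1.2), together with $p_{j,j,j}=m_{j,j}$ and $p_{i,j,s}=0$ for $s<i$ or $s>j$, form an explicit recursion (first induct on $j$; for fixed $j$ they determine the entries with $s<j$ through (1.1), using only entries with a strictly smaller middle index, and then the entry with $s=j$ through (1.2)), this system has exactly one solution, so once the $q_{i,j,s}$ are shown to obey it we obtain $q_{i,j,s}=p_{i,j,s}$. Two of the relations are immediate: the only chain from $j$ to $j$ through $j$ is the single factor $m_{j,j}$, whose adjusted value is $m_{j,j}$, so $q_{j,j,j}=m_{j,j}$; and $q_{i,j,s}=0$ for $s\notin\{i,\dots,j\}$ is built into Definition 2.

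Next I would check (1.1), so fix $i\le s<j$. Since $j$ is an index appearing in the denominator product of every adjusted chain from $i$ to $j$ through $s$ and $j\neq s$, multiplying $q_{i,j,s}$ by $m_{s,s}-m_{j,j}$ simply deletes the factor $m_{s,s}-m_{j,j}$ from each of those denominators. In each chain I then strip off its last matrix entry $m_{t,j}$, where $t$ is the index immediately before $j$. The structural point that makes this clean is that, because $s<j$, a chain through $s$ cannot be the bare one-entry chain $m_{i,j}$ (that chain passes only through $j$), so every chain through $s$ has at least two entries and what remains after removing $m_{t,j}$ is again a genuine chain, now from $i$ to $t$ and still through $s$, with $s\le t\le j-1$. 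Conversely, appending $m_{t,j}$ to any chain from $i$ to $t$ through $s$ gives a chain from $i$ to $j$ through $s$ whose penultimate index is $t$. Under this bijection an adjusted chain from $i$ to $j$ through $s$ with the factor $m_{s,s}-m_{j,j}$ already cancelled corresponds precisely to (adjusted chain from $i$ to $t$ through $s$) times $m_{t,j}$; summing over $t$ gives $(m_{s,s}-m_{j,j})\,q_{i,j,s}=\sum_{t=s}^{j-1}q_{i,t,s}\,m_{t,j}$, which is (1.1).

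For (1.2) it suffices to prove $\sum_{s=i}^{j}q_{i,j,s}=m_{i,j}$ for $i<j$, since rearranging this is exactly (1.2). Here I would interchange the two summations, collecting terms by the underlying chain instead of by $s$: writing $c$ for a chain from $i$ to $j$, $V_c$ for the product of its entries, and $N_c$ for the set of indices it passes through, one gets $\sum_{s}q_{i,j,s}=\sum_{c}V_c\sum_{s\in N_c}\prod_{v\in N_c,\,v\neq s}(m_{s,s}-m_{v,v})^{-1}$. The inner sum is the classical identity $\sum_{x\in X}\prod_{y\in X\setminus\{x\}}(x-y)^{-1}$, which --- because the diagonal entries are distinct, so the numbers $m_{v,v}$ with $v\in N_c$ are genuinely distinct --- equals $0$ when $|N_c|\ge 2$ and equals $1$ when $|N_c|=1$. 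Hence only the single-entry chain $c=m_{i,j}$ contributes, giving $V_c=m_{i,j}$, as claimed.

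The routine parts are the bookkeeping in these two computations; the one place that needs genuine care is making the bijection in the (1.1) step watertight at the boundary --- the roles of the leading self-loop $m_{i,i}$, of the case $t=i$, and of the fact that no one-entry chain passes through an index $s<j$ --- together with invoking the partial-fraction (divided-difference) identity correctly. The hypothesis that the diagonal entries are distinct is used precisely to make all the denominators meaningful and to make that identity valid; no non-singularity is needed here.
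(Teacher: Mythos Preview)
Your proposal is correct and follows essentially the same route as the paper: both verify that the Definition~2 quantities satisfy the Definition~1 recursion by stripping off the final factor $m_{t,j}$ together with the denominator factor $(m_{s,s}-m_{j,j})$ to obtain (1.1), and by grouping adjusted chains according to their underlying chain and invoking the divided-difference/partial-fraction identity to obtain (1.2). You add an explicit uniqueness-of-recursion remark and more careful edge-case bookkeeping (the one-entry chain, the $t=i$ boundary), which the paper leaves implicit, but the argument is otherwise the same.
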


\begin{proof}
From Definition 2, each term of the  summand in (1.1) is of the form \[\frac{ m_{i,a}m_{a,b}m_{b,c}\cdots m_{l,t}m_{t,j}}{(m_{s,s}-m_{a,a})(m_{s,s}-m_{b,b})(m_{s,s}-m_{c,c})\cdots (m_{s,s}-m_{l,l})(m_{s,s}-m_{t,t})},\] where $(m_{s,s}-m_{s,s})$ is taken as $1$. The sum of  all such terms, from $t=s$ to $t=j-1$,  clearly includes all of the chains from $i$ to  $j$ passing through $s$. That sum  would be the sum of all  the adjusted chains from $i$ to $j$ passing through $s$, except that the difference $(m_{s,s}-m_{j,j)}$ is missing from each denominator. Hence the division by that difference in (1.1), and thus (1.1) is satisfied.

Next, we need to show that (1.2) is satisfied.  The sum $\displaystyle\sum_{t=i}^{j}p_{i,j,t}$ consists of all adjusted chains from $i$ to $j$ of the form \[\frac{ m_{i,a_1}m_{a_1,a_2}m_{a_2,a_3}\cdots m_{a_r,j}}{(m_{s,s}-m_{a_1,a_1})(m_{s,s}-m_{a_2,a_2})(m_{s,s}-m_{a_3,a_3})\cdots (m_{s,s}-m_{a_r,a_r})(m_{s,s}-m_{j,j})},\]with values  $r=1,2,3,\cdots,j-i$ and  $s=a_1,a_2,a_3,\cdots,j$ . The only term with \mbox{$r=1$} comes from $p_{i,j,j}$ and is equal to $m_{i,j}$ (see the illustrative example for $p_{1,3,3}$ in  Definition 2). The sum of all the adjusted chains of length $2$ with the same numerator is
\vspace{.2 in}

$  {m_{i,a_1}m_{a_1,a_2}(\frac{1}{m_{a_1,a_1}-m_{a_2,a_2}}+\frac{1}{m_{a_2,a_2}-m_{a_1,a_1}})=0.}$ 
\vspace{.2 in}

The sum of all adjusted chains of length $3$ with the same numerator is
\vspace{.2 in}

 $ m_{i,a_1}m_{a_1,a_2}m_{a_2,a_3}(\frac{1}{(m_{a_1,a_1}-m_{a_2,a_2})(m_{a_1,a_1}-m_{a_3,a_3})}+\frac{1}{(m_{a_2,a_2}-m_{a_1,a_1})(m_{a_2,a_2}-m_{a_3,a_3})}$+

$\frac{1}{(m_{a_3,a_3}-m_{a_1,a_1})(m_{a_3,a_3}-m_{a_2,a_2})})=0.$
\vspace{.2 in}

In general, the multiplier of $m_{i,a_1}m_{a_1,a_2}m_{a_2,a_3}\cdots m_{a_r,j}$ is seen \cite{J} to be the $(r-1)^{st}$ divided difference of the polynomial $f(x)=1$, and hence is $0$ if $r\ge 2 $.

 Therefore,$\displaystyle \sum_{t=i}^{j}p_{i,j,t}=m_{i,j}$ , and (1.2) is satisfied. And since \mbox{$p_{j,j,j}=m_{i,j}$}, Definition 2 is equivalent to Definition 1.
 
\vspace{.3 in}

\end{proof}

\bf 3. NON-UNIQUE DIAGONAL ELEMENTS\rm

\begin{theo}

Let $M=[m_{i,j}]$ be an upper $k\times k$ triangular matrix  with non-unique diagonal elements. Let  $
num(i,j)$ be the number of unique diagonal elements between and including the $i^{th}$ and $j^{th}$ rows, $\{ m_{i,j,r}\}_1^{num(i,j)}$ be the set of those unique elements, and $mpy_{i,j}(r)$  be the multiplicity of $m_{i,j,r}$ on that same range ($\displaystyle \sum_{t=1}^{num(i,j)}mpy_{i,j}(r)=k$). 

Then \[_nm_{i,j}=\sum_{r=1}^{num(i,j)}\sum_{s=1}^{mpy_{i,j}(r)} c_{i,j,r,s}\binom{n-1}{s-1}\,\, {
m_{i,j,r}^{n-s}},\]
where the $c_{i,j,r,s}$ are independent of the power to which the matrix is raised. In particular, 
\[c_{i,j,r,s}=\left [p_{i,j,i_1}x^{i_1(s-1)}+p_{i,j,i_2}x^{i_2(s-1)}+ \cdots +p_{i,j,i_{mpy_{i,j}(r)}}
x^{i_{mpy_{i,j}(r)}(s-1)} \right ]_{x=0}, \]
where ${(i_1,i_1)}, ({i_2,i_2)}, \cdots, {(i_{mpy_{i,j}(r)},i_{mpy_{i,j}(r)})}$
are the   $mpy_{i,j}(r)$ locations where the value $m_{i,j,r}$ appears, and the $p_{i,j,i_1}, p_{i,j,i_2}\cdots$ are determined from the matrix $M$ with $m_{i,i}$ replaced by $m_{i,i}+x^i$.

This generalized formula works for all integral values of $n$, negative, positive or zero. If $M$ is singular, the formula holds if $n\geq 1$ and $0^0$ is taken as $1$.
\end{theo}
\begin{proof}

We first alter the matrix $M$ by replacing each diagonal element $m_{i,i}$ by $m_{i,i}+x_i$, where the $x_i$ are variables which will be changed to $x^i$ and set to zero at the final stage to obtain results for the unaltered matrix $M$. The purpose of this alteration is to prevent any denominators from becoming zero during the development of final formulas.

We have from Theorem 1,

\[_{n}m_{i,j}=\displaystyle\sum_{s=i}^{j}p_{i,j,s}(m_{s,s}+x_s)^{n-1},\hspace{.9in}(3.1)\]

To make the notation in what follows easier to read, let \mbox{$m_{i,j,r}=m$} and $mpy_{i,j}(r)=t$, that is, the value $m$ appears exactly $t$ times among the diagonal elements between and including the $i^{th}$ and $j^{th}$ rows. Let ${(i_1,i_1)}, ({i_2,i_2)}, \cdots, {(i_t,i_t)}$ be the $t$ locations where the value $m$ appears.

Consider only those terms of the sum in (3.1) where $m_{s,s}=m_{i,j,r}=m$. We obtain

$p_{i,j,i_1}(m+x_{i_1})^{n-1}+p_{i,j,i_2}(m+x_{i_2})^{n-1}+\cdots +p_{i,j,i_t}(m+x_{i_t})^{n-1}$

Expanding the binomials, and collecting powers of $m$, we have

$\displaystyle \sum_{s=1}^n \left[p_{i,j,i_1}x_{i_1}^{s-1}+p_{i,j,i_2}x_{i_2}^{s-1}+\cdots+p_{i,j,i_t}x_{i_t}^{s-1}\right]\binom{n-1}{s-1}m^{n-s}$\hspace{.9in}(3.2)
\vspace{.2in}

Recall from Definition 2 that $p_{i,j,s}$ is the sum of all adjusted chains from $i$ to $j$ passing through $s$. If $t>1$, each of the adjusted chains in $p_{i,j,i_1}, p_{i,j,i_2}\cdots$ will contain exactly $t-1$ factors in the denominator which become zero when $x_{i_1}=x_{i_2}\cdots =x_{i_t}=0$. For example, the adjusted chains making up $p_{i,j,i_1}$ will contain the factors \mbox{ $(x_{i_1}-x_{i_2})(x_{i_1}-x_{i_3})\cdots (x_{i_1}-x_{i_t})$} in the denominator. We need to show that, nevertheless, the bracketed sum in (3.2) is not indeterminate when $x_{i_1}=x_{i_2}\cdots =x_{i_t}=0$.

We begin by replacing each $p_{i,j,s}$ in (3.2) by its representation as a sum of adjusted chains. We then separate all the bracketed terms into mutually exclusive sets, each set consisting of of all the terms where the numerator of the adjusted chain passes through a particular set of points. For example, one such set would consist of all the terms where the numerator of the adjusted chain passes through the points 1,3,4 and 9. It is not difficult to show that, for each such set, the sum of all its terms is not indeterminate. We illustrate the method for a set where  where the numerator, $N$, of the adjusted chain passes through the points $a,b,c,d,e,f$, where $m_{a,a}=m_{b,b}=m_{c,c} =m_{d,d}=m$, $m_{e,e}\ne m$, and $m_{f,f}\ne m$.

Noting, for example, that $(m_{a,a}+x_a)-(m_{b,b}+x_b)=(x_a-x_b)$, and letting $f(x_s)=((m_{s,s}+x_s)-(m_{e,e}+x_e))((m_{s,s}+x_s)-(m_{f,f}+x_f))$, the sum of the terms in that set is

\begin{eqnarray*}N \left [x_a^{s-1}/(x_a-x_b)(x_a-x_c)(x_a-x_d)f(x_a) + x_b^{s-1}/(x_b-x_a)(x_b-x_c)(x_b-x_d)f(x_b)\right.\\
 \left. +x_c^{s-1}/(x_c-x_a)(x_c-x_b)(x_c-x_d)f(x_c)+x_d^{s-1}/(x_d-x_a)(x_d-x_b)(x_d-x_c)f(x_d)\right ] \end{eqnarray*}

Combining the bracketed terms into a single fraction with the denominator $(x_a-x_b)(x_a-x_c)(x_a-x_d)(x_b-x_c)(x_b-x_d)(x_c-x_d)f(x_a)f(x_b)f(x_c)f(x_d)$, the numerator is

\begin{eqnarray*} (x_b-x_c)(x_b-x_d)(x_c-x_d)f(x_b)f(x_c)f(x_d)x_a^{s-1}\\
-(x_a-x_c)(x_a-x_d)(x_c-x_d)f(x_a)f(x_c)f(x_d)x_b^{s-1}\\
(x_a-x_b)(x_a-x_d)(x_b-x_d)f(x_a)f(x_b)f(x_d)x_c^{s-1}\\
-(x_a-x_b)(x_a-x_c)(x_b-x_c)f(x_a)f(x_b)f(x_c)x_d^{s-1}
\end{eqnarray*}

Note that if $x_a=x_b$, the numerator is zero. Hence, $x_a-x_b$ is a factor of the numerator and cancels out the corresponding factor in the denominator. The same is true for each other pair of $x_a, x_b, x_c, x_d$.  And since none of $f(a),f(b),f(c),f(d)$ is zero when $x_{i_1}=x_{i_2}\cdots =x_{i_t}=0$, the bracketed sum is not indeterminate.

We show now in the following paragraphs that the bracketed sum in (3.2) is zero if $s>t$.

Again, replace each $p_{i,j,s}$ in (3.2) by its representation as a sum of adjusted chains, but this time combine all the bracketed terms into a single fraction, $F$, with the denominator $DZ$, where

 $D=\prod \limits_{\stackrel{p=1}{p\ne 1}}^t (x_{i_1}-x_{i_p})\prod \limits_{\stackrel{p=1}{p\ne 2}}^t (x_{i_2}-x_{i_p})\cdots \prod \limits_{\stackrel{p=1}{p\ne t}}^t (x_{i_t}-x_{i_p})$,

\noindent and $Z$ is the product of all factors in the denominators of the adjusted chains which do not become zero when $x_{i_1}=x_{i_2}\cdots =x_{i_t}=0$.
 
If the sum of the bracketed terms in (3.2) were equal to a constant different from zero when $x_{i_1}=x_{i_2}\cdots =x_{i_t}=0$, the numerator of $F$ would have to be equal to $A*D$, where $A$ is a constant not equal to zero. But note that, when $D$ is expanded, it contains the term $x_{i_1}^{t-1}x_{i_2}^{t-1}\cdots x_{i_t}^{t-1}$. This term cannot be in the numerator of $F$ when $s>t$, because every term in the numerator of $F$ would contain a power greater than $t-1$ of at least one of $x_{i_1},x_{i_2},\cdots x_{i_t}$. Therefore, when $s>t$, the numerator of $F$ cannot be $A*D, A\ne 0$; hence the sum of the bracketed terms in (3.2) is zero when $s>t$. 

Replacing in (3.2) $n$ by $t$ (and recalling that $t=mpy_{i,j}(r)$),  $m$ by  \mbox{$m_{i,j,r}$}, and the $x_i$ by $x^i$ , we see that the sum of the terms in (3.1) where $m_{s,s}=m_{i,j,r}$ is given by 

\hspace{-1.4in}$\displaystyle \sum_{s=1}^{mpy_{i,j}(r)} \left [p_{i,j,i_1}x^{i_1(s-1)}+p_{i,j,i_2}x^{i_2(s-1)}+ \cdots +p_{i,j,i_{mpy_{i,j}(r)}}x^{i_{mpy_{i,j}(r)}(s-1)} \right ]\binom{n-1}{s-1}m_{i,j,r}^{n-s}$, \hspace{.7in}(3.3)

\noindent where ${(i_1,i_1)}, ({i_2,i_2)}, \cdots, {(i_{mpy_{i,j}(r)},i_{mpy_{i,j}(r)})}$ are the $mpy_{i,j}(r)$ locations where the value $m_{i,j,r}$ appears. 

Summing (3.3) from $r$=1 to $num(i,j)$, and letting 

\[c_{i,j,r,s}=\left [p_{i,j,i_1}x^{i_1(s-1)}+p_{i,j,i_2}x^{i_2(s-1)}+ \cdots +p_{i,j,i_{mpy_{i,j}(r)}}
x^{i_{mpy_{i,j}(r)}(s-1)} \right ]_{x=0}, \] gives us Theorem 3.

\end{proof}

\bf 4. Illustrations\rm

In this section we give three examples of how to use Mathematica to obtain closed form formulas. The first step, of course, in using Mathematica is to define the altered matrix and (using Definition 1) the $p$ factors.

(1) let $M=[m_{i,j}]$ be the matrix
\vspace{.2in}

 \[ M= \left(\begin{array}{cccccc}
3&2&3&5&4&2\\
0&5&2&4&3&1\\
0&0&3&2&6&4\\
0&0&0&5&5&1\\
0&0&0&0&7&2\\
0&0&0&0&0&3
\end{array}\right)\]
In terms of Theorem 3,
\begin{eqnarray*}
m_{1,6,1}=3&mpy_{1,6}(1)=3,\\
m_{1,6,2}=5&mpy_{1,6}(2)=2,\\
m_{1,6,3}=7&mpy_{1,6}(3)=1.
\end{eqnarray*}
\nopagebreak and  $num(1,6)=3$.
In order to make use of Theorem 3, we first alter the matrix $M$ so that it will have unique diagonal elements, as follows:
\vspace{.3in}

$ M= \left(\begin{array}{cccccc}
3+x&2&3&5&4&2\\
0&5+x^2&2&4&3&1\\
0&0&3+x^3&2&6&4\\
0&0&0&5+x^4&5&1\\
0&0&0&0&7+x^5&2\\
0&0&0&0&0&3+x^6
\end{array}\right)$

\vspace{.3in}

The following paragraphs show how to  find a closed form formula for $_nm_{1,6}$. 
\vspace{.2in}

Determine the coefficients of powers of $3$:
\vspace{.2in}

\hspace{-.2in}\mbox{$c_{1,6,1,1}$ 
Input: $(p[1,6,1]+p[1,6,3]+p[1,6,6]$//Together)/.$\{x->0\}$}

\hspace{.2in} Output: $-\frac{203}{32}$
\vspace{.2in}

\hspace{-.2in}\mbox{$c_{1,6,1,2}$ Input: $(p[1,6,1]x+p[1,6,3]x^\wedge3+p[1,6,6]x^\wedge6$//Together)/.$\{x->0\}$}

\hspace{.2in} Output: $\frac{5}{8}$
\vspace{.2in}

\hspace{-.2in}\mbox{$c_{1,6,1,3}$ Input: $(p[1,6,1]x^\wedge2+p[1,6,3]x^\wedge6+p[1,6,6]x^\wedge12$//Together)/.$\{x->0\}$}

\hspace{.2in} Output: $\frac{15}{2}$
\vspace{.3in}

Determine the coefficients of powers of $5$:

\vspace{.2in}

$c_{1,6,2,1}$ Input: $(p[1,6,2]+p[1,6,4]$//Together)/. $\{x->0\}$

\hspace{.4in} Output: $-\frac{59}{2}$

\vspace{.1in}

$c_{1,6,2,2}$ Input:$((p[1,6,2]x^\wedge2 +p[1,6,4]x^\wedge4)$//Together)/. $\{x->0\}$

\hspace{.4in} Output: $-60$

\vspace{.2in}

Determine the coefficient of powers of $7$: 
\vspace{.2in}

$c_{1,6,3,1}$ Input: (p[1,6,5]//Together)/.$\{x->0\}$

\hspace{.4in} Output: $\frac{1211}{32}$

\vspace{.2in}

Combining  the above results, we have from Theorem 3,

\begin{eqnarray*}_nm_{1,6}=-\frac{203}{32}3^{n-1}+\frac{5}{8}\binom{n-1}{1}3^{n-2}+\frac{15}{2}\binom{n-1}{2}3^{n-3}-\\[.1in]
\frac{59}{2}5^{n-1}-60\binom{n-1}{1}5^{n-2}+\frac{1211}{32}7^{n-1}.
\end{eqnarray*}

In the terms of Theorem 3, this is
\begin{eqnarray*}_nm_{1,6}=c_{1,6,1,1}m_{1,6,1}^{n-1}+c_{1,6,1,2}\binom{n-1}{1}m_{1,6,1}^{n-2}+c_{1,6,1,3}\binom{n-1}{2}m_{1,6,1}^{n-3}+\\[.1in]
c_{1,6,2,1}m_{1,6,2}^{n-1}+c_{1,6,2,2}\binom{n-1}{1}m_{1,6,2}^{n-2}+c_{1,6,3,1}m_{1,6,3}^{n-1}.
\end{eqnarray*}

\vspace{.2in}

(2) The following paragraphs show how to  find a closed form formula for $_nm_{2,4}$ for the  matrix $M$ above, noting that $m_{2,4,1}=5$, $mpy_{2,4}(1)=2$, $m_{2,4,2}=3$, $mpy_{2,4}(2)=1$, and $num(2,4)=2$. 
\vspace{.2in}

Determine the coefficient of powers of $5$:
\vspace{.2in}

$c_{2,4,1,1}$ Input: $(p[2,4,2]+p[2,4,4]$//Together)/.$\{x->0\}$

\hspace{.4in} Output: $1$

\vspace{.1in}

$c_{2,4,1,2}$ Input:$(p[2,4,2]x^\wedge2+p[2,4,4]x^\wedge4$//Together)/.$\{x->0\}$

\hspace{.4in} Output: $30$

\vspace{.2in}
Determine the coefficient of powers of $3$:

$c_{2,4,2,1}$ Input:$(p[2,4,3]$//Together)/.$\{x->0\}$

\hspace{.4in} Output: $3$

\vspace{.2in}

Combining  the above results, we have from Theorem 3,  \[_nm_{2,4}=1\cdot 5^{n-1}+30\binom{n-1}{1}5^{n-2}+3\cdot3^{n-1}.\]
\vspace{.3in}

(3) Let $M$ be the matrix

 \parbox{3cm}{$\left(\begin{array}{cccc}
5&2&1&3\\
0&5&4&2\\
0&0&5&1\\
0&0&0&5
\end{array}\right)$}
\vspace{.2in}

The following paragraphs show how to  find a closed form formula for $_nm_{1,4}$ , noting that \mbox{$m_{1,4,1}=5$},  \mbox{$mpy_{1,4}(1)=4$}, and $num(1,4)=1$. 
\vspace{.2in}

\hspace{-1.2in}\mbox{$c_{1,4,1,1}$} Input:
 $(p[1,4,1]+p[1,4,2]+p[1,4,3]+p[1,4,4]$//Together)
/.$\{x->0\}$

\hspace{-.1in} Output: $3$

\vspace{.1in}

\hspace{-1.4in}\mbox{$c_{1,4,1,2}$} Input:$p[1,4,1]x+p[1,4,2]x^\wedge2+p[1,4,3]x^\wedge3+p[1,4,4]x^\wedge4$//Together)/.$\{x->0\}$

\hspace{-.1in} Output: $20$

\vspace{.2in}

\hspace{-1.4in}\mbox{$c_{1,4,1,3}$} Input:$p[1,4,1]x^\wedge2+p[1,4,2]x^\wedge4+p[1,4,3]x^\wedge6+p[1,4,4]x^\wedge8$//Together)/.$\{x->0\}$

\hspace{-.1in} Output: $33$

\vspace{.2in}

\hspace{-1.4in}\mbox{$c_{1,4,1,4}$} Input:$p[1,4,1]x^\wedge3+p[1,4,2]x^\wedge6+p[1,4,3]x^\wedge9+p[1,4,4]x^\wedge12$//Together)/.$\{x->0\}$

\hspace{-.1in} Output: $40$

\vspace{.2in}

Combining  the above results, we have from Theorem 3,
\[_nm_{1,4}=3\cdot5^{n-1}+20 \binom{n-1}{1}5^{n-2}+33 \binom{n-1}{2}5^{n-3}+40\binom{n-1}{3}5^{n-4}.\]

\vspace {.2in}
(Because all of the diagonal elements have the same value, it is interesting to note that the above result can be obtained by thinking of the matrix as a one-way Markov process (notwithstanding the fact that the elements of the matrix are not  between zero and one). Given that the system is in state $i$, $m_{i,j}$ represents the probability of being in state $j$ after the next trial, and $_nm_{1,4}$ represents the probability of being in state $4$ after the  $n^{th}$ trial. $_nm_{1,4}$ is then the sum of the following mutually exclusive probabilities:

$m_{1,4}\cdot 5^{n-1}=3\cdot 5^{n-1}$ represents the probability that $4$ is the only state occurring in the $n$ trials,

($m_{1,1}m_{14}+m_{1,2}m_{2,4} +m_{1,3}m_{3,4})\binom{n-1}{1}5^{n-2}=20\binom{n-1}{1}5^{n-2}$ represents the probability that exactly $2$ states occur in the $n$ trials,

($m_{1,1}m_{1,2}m_{2,4}+m_{1,1}m_{1,3}m_{3,4}+m_{1,2}m_{2,3}m_{3,4})\binom{n-1}{2}5^{n-3}=33\binom{n-1}{2}5^{n-3}$ represents the probability that exactly $3$ states occur in the $n$ trials, and

$m_{1,1}m_{1,2}m_{2,3}m_{3,4} \binom{n-1}{3}5^{n-4}=40 \binom{n-1}{3}5^{n-4}$ represents the probability that exactly $4$ states occur in the $n$ trials.)

\vspace{1in}
\hspace{4in} 4/16/14


\begin{thebibliography}{1}
\bibitem{S}   W. Shur. A Simple Closed Form For Triangular Matrix Powers. \it Electronic Journal of Linear Algebra, \rm  22:1000-1003, 2011
\vspace{.2 in}
\bibitem{J} V. Jankovic. Divided Differences. \it The Teaching Of Mathematics, \rm  III, 2:115-119, 2000.
\vspace{.2 in}

\bibitem{H} C.P. Huang, An Efficient Algorithm For Computing Powers Of Triangular Matrices, \it ACM '78 Proceedings of the 1978 Annual Conference, \rm 2:954-957, 1978
\end{thebibliography}
\end{document}